\def\Ddots{\mathinner{\mkern1mu\raise\p@
\vbox{\kern7\p@\hbox{.}}\mkern2mu
\raise4\p@\hbox{.}\mkern2mu\raise7\p@\hbox{.}\mkern1mu}}
\def\Xint#1{\mathchoice
{\XXint\displaystyle\textstyle{#1}}%
{\XXint\textstyle\scriptstyle{#1}}%
{\XXint\scriptstyle\scriptscriptstyle{#1}}%
{\XXint\scriptscriptstyle\scriptscriptstyle{#1}}%
\!\int}
\def\XXint#1#2#3{{\setbox0=\hbox{$#1{#2#3}{\int}$}
\vcenter{\hbox{$#2#3$}}\kern-.5\wd0}}
\def\dashint{\Xint-}
\newtheorem{theorem}{Theorem}[section]
\theoremstyle{definition}
\newtheorem{lemma}[theorem]{Lemma}
\def\al{{\alpha}}
\def\R{\mathbb R}
\def\ra{\rightarrow}
\def\bey{\begin{eqnarray*}}
\def\eey{\end{eqnarray*}}
\DeclareMathOperator{\supp}{supp}
\def\M{{\mathcal M}}
\newcommand{\cz}{\text{Calder\'on-Zygmund}}
\newcommand{\eps}{\epsilon}
\def\({\left(}
\def\){\right)}
\def\[{\left[}
\def\]{\right]}
\def\<{\langle}
\def\>{\rangle}
\newcommand{\nR}{{\mathbb R}}
\begin{document}

\subjclass[2010]{Primary: 42B20, 47B07; Secondary: 42B25, 47G99}
\keywords{Bilinear operators, compact operators, singular integrals, fractional integrals, Calder\'on-Zygmund theory, commutators, Muckenhoupt weights, vector valued weights, weighted Lebesgue spaces}

\title[Characterization of compactness of bilinear commutators]{Characterization of compactness of the commutators of bilinear fractional integral operators}

\date{\today}

\author[L. Chaffee]{Lucas Chaffee}
\address{%
Department of Mathematics\\
University of Kansas\\
Lawrence, KS 66045, USA}
\email{chaffel@ku.edu}

\author[R. H. Torres]{Rodolfo H. Torres}
\address{%
Department of Mathematics\\
University of Kansas\\
Lawrence, KS 66045, USA}
\email{torres@ku.edu}

\thanks{Both authors partially supported by NSF grant DMS 1069015.}

\begin{abstract}
{The compactness of the commutators of bilinear fractional integral operators and point-wise multiplication, acting on products of Lebesgue spaces, is characterized in terms of appropriate mean oscillation properties of their symbols. The compactness of the commutators when acting on product of weighted Lebesgue spaces is also studied.}
\end{abstract}

\maketitle

\section{Introduction}

The purpose of this article is first: to characterize the compactness of the commutators of bilinear fractional integral operators with pointwise multiplication acting on product of Lebesgue spaces; and second: to obtain conditions on multiple weights, which yield compactness on the weighted Lebesgue spaces
(precise definitions are given in the next section).

We briefly summarize some classical and recent works in the literature, which lead to the results presented here. The first result on compactness of commutators of singular integrals with point-wise multiplication is due to Uchiyama \cite{U}. He refined the boundedness results of Coifman, Rochberg and Weiss \cite{CRW} on the commutator with symbols in the John-Nirenberg space $BMO$ to compactness. This is achieved by requiring the symbol to be not just in $BMO$, but rather in $CMO$, which is the closure in $BMO$ of the space of $C^\infty$ functions with compact support. For linear fractional integrals, the characterization of boundedness of the commutator was established by Chanillo \cite{Ch}, while the one for compactness is credited in Chen, Ding and Wang \cite{CDW} to Wang \cite{W}. As in the case of singular integrals of \cz\, type, the conditions are again that the symbol is respectively in $BMO$ or $CMO$.

In the multilinear setting, commutators of \cz \, operators and fractional integrals started to receive attention only a few years ago. For the \cz \, operators, as defined by Grafakos and Torres \cite{GT},  the main boundedness results for commutators with symbols in $BMO$ were obtained by P\'erez and Torres \cite{PT}, Tang \cite{tang}, Lerner et al. \cite{LOPTT} and P\'erez et al \cite{PPTT}. Meanwhile, for the commutator of multilinear fractional integrals, one can cite
the works of Lian and Wu \cite{LW}, Chen and Xue \cite{CX} and Chen and Wu \cite{CW}. Some of these works include weighted estimates as well. Compactness results in the multilinear setting have just began to be studied. In particular, B\'enyi and Torres \cite{BT} and B\'enyi et al. \cite{BDMT} showed that symbols in $CMO$  again produce compact commutators.  Very recently, Chaffee \cite{Chaff} proved that the symbols must be in $BMO$ to obtain boundedness of the commutators. Here we will show that the smaller space $CMO$ in fact characterizes compactness in the bilinear setting. In the process we will also obtain a result about compactness of commutators with bilinear fractional integrals on weighted Lebesgue spaces. This last result complements the results of B\'enyi et al. \cite{BDMT2} for bilinear \cz \, operators and it is of interest in its own. Formally, the characterization results for $\al=0$ would correspond to the case of bilinear \cz \, operators. However, some of the techniques employed in this paper do not apply to \cz \, operators, mainly because they lack positive kernels. We intend to study the case of \cz \,  operators in future work.

\section{Definitions and preliminaries}

As usual,  $BMO$ is the space of all locally integrable functions $b$ such that
$$\|b\|_{BMO}:=\sup_{Q}\dashint_Q |b(x)-\dashint_Q b|\, dx < \infty,$$
where the  supremum is taken over all cubes $Q\in \R^n$  with sides parallel to the coordinate axes, and  $\dashint_Q b= b_Q$ is the average of $b$ over $Q$.
Also, as mentioned in the introduction,  $CMO$ is the closure in the $BMO$ norm of $C^\infty_c(\R^n)$, which represents
the space of infinitely differentiable functions with compact support. It was shown in \cite{U} that $CMO$ can be characterized in the following way.

{\it A function $b \in BMO$ is in $CMO$ if an only if,

\begin{align}
  & \displaystyle\lim_{a\to0}\sup_{|Q|=a}\frac{1}{|Q|}\int_Q|b(x)-b_Q|dx=0, \label{1cmo}\\
   & \displaystyle\lim_{a\to\infty}\sup_{|Q|=a}\frac{1}{|Q|}\int_Q|b(x)-b_Q|dx=0, \label{2cmo}\\
    & \displaystyle\lim_{|y|\to\infty}\frac{1}{|Q|}\int_Q|b(x+y)-b_Q|dx=0, \mbox{ for each } Q.\label{3cmo}
    \end{align}
}

For $0<\alpha<2n$ the bilinear fractional integral operator $\mathcal I_\alpha$ is a priori defined for $f,g\in C^\infty_c$ by
\begin{equation}\label{Ialphacal}
\mathcal I_\alpha(f,g)(x):=\iint_{\R^{2n}}\frac{1}{(|x-y|+|x-z|)^{2n-\alpha}}\,f(y)g(z)\,dydz\nonumber.
\end{equation}
For convenience we will consider here the equivalent operator
\begin{equation}\label{Ialpha}
I_\alpha(f,g)(x):=\iint_{\R^{2n}}\frac{1}{(|x-y|^2+|x-z|^2)^{n-\alpha/2}}\,f(y)g(z)\,dydz \nonumber.
\end{equation}
Its commutators with {\it symbol} $b\in BMO$  are given by
$$[b,I_\alpha]_1(f,g):=I_\alpha(bf,g)-bI_\alpha(f,g)$$
and
$$[b,I_\alpha]_2(f,g):=I_\alpha(f,bg)-bI_\alpha(f,g).$$
By symmetry, it would be enough in what follows to consider one of these two commutators, say $[b,I_\alpha]_1$, and we will do so.

For $1<p<\infty$, recall that the Muckenhoupt class $A_p$ of weights consists of all non-negative, locally integrable, functions $w$ such that
\begin{equation}\label{Ap}
[w]_{A_p}:=\sup_Q \left(\dashint_Q w\right)\left(\dashint_Q w^{1-p'}\right)^{\frac{p}{p'}}<\infty;\nonumber
\end{equation}
while    $A_{\infty} = \cup_{1<p<\infty} A_p$. For $1<p\leq q<\infty$, the weight $w$ is in $A_{p,q}$ if
$$[w]_{A_{p,q}}:=\sup_{Q}\left(\dashint_Q w^{q}\right) \left( \dashint_Q w^{-p'}\right)^{q/p'}<\infty.$$
It is easy to see that
$$
[w]_{A_{p,q}}=[w^q]_{A_{1+q/p'}}.
$$

We also recall the definition of the multiple or vector weights used in the bilinear setting. For $1<p_1,p_2<\infty$, ${\bf P}=(p_1,p_2)$, and $p$ such that $\frac1p=\frac1{p_1}+\frac1{p_2}$,
a vector weight ${\bf w}=(w_1,w_2)$ belongs to ${\bf A}_{\bf P}$ if
$$[{\bf w}]_{{\bf A}_{\bf P}}:=\sup_{Q}\left(\dashint_Q w_1^{p/p_1}w_2^{p/p_2}\right)
\left(\dashint_Q w_1^{1-p_1'}\right)^{p/p_1'} \left(\dashint_Q w_2^{1-p_2'}\right)^{p/p_2'}<\infty.$$
For brevity, we will often use the notation $\nu_{\bf w}=w_1^{p/p_1}w_2^{p/p_2}$ in the first integral.
We note that in \cite{LOPTT} it was shown that for ${\bf w} \in {\bf A}_{\bf P}$, it holds that
$\nu_{\bf w}\in A_{2p}$, and that
\begin{equation}\label{holderconsequence}
A_{p_1}\times A_{p_2}\subsetneq {\bf A}_{\bf P}\subsetneq {\bf A}_{c\bf P}, \nonumber
\end{equation}
 for $c>1$.

For $1<p_1,p_2<\infty$, ${\bf P}=(p_1,p_2)$,  $0<\alpha< 2n$, $\frac\alpha{n} < \frac1{p_1}+\frac1{p_2}$,
and $q$ such that $\frac1q=\frac1{p_1}+\frac1{p_2}-\frac\alpha n$,
a vector weight ${\bf w}=(w_1,w_2)$ belongs to ${\bf A}_{{\bf P},q}$ if
$$[{\bf w}]_{{\bf A}_{{\bf P},q}}:=\sup_{Q}\left(\dashint_Q w_1^{q}w_2^{q}\right)  \left(\dashint_Q w_1^{-p_1'}\right)^{q/p_1'}
\left(\dashint_Q w_2^{-p_2'}\right)^{q/p_2'}<\infty .
$$
As with the $A_{\bf P}$ weights, for brevity we will use $\mu_{\bf w}=w_1^{q}w_2^{q}$. To avoid ambiguities in the notation we will use $\nu_{\bf w}$ when dealing with  ${\bf A}_{\bf P}$  classes and $\mu_{\bf w}$ with ${\bf A}_{{\bf P},q}$ ones.
It was shown by Moen in \cite{M} that if   ${\bf w} \in {\bf A}_{{\bf P},q}$  then  $w_i^{-p_i'}\in A_{2p_i'}$ and $\mu_{\bf w} \in A_{2q}$.
In addition, the weights in  ${\bf A}_{{\bf P},q}$  are precisely those for which
$$I_\alpha: L^{p_1}(w_1^{p_1})\times L^{p_2}(w_2^{p_2})\to L^q(\mu_{\bf w})$$
 is bounded.

A useful tool when studying bilinear fractional singular integrals is the corresponding maximal function
$$\M_\al(f,g)(x)=\sup_{Q\ni x} |Q|^{\al/n}\Big(\dashint_Q|f(y)|\,dy\Big)\Big(\dashint_Q|g(z)|\,dz\Big),$$
which also satisfies the bounds
$$\M_\alpha: L^{p_1}(w_1^{p_1})\times L^{p_2}(w_2^{p_2})\to L^q(\mu_{\bf w})$$
for the same parameter as $I_\alpha$. See \cite{M}.

The classes  ${\bf A}_{{\bf P},q}$ are also the natural ones for the the boundedness of commutators of bilinear fractional integral operators. In fact, it was first shown in \cite{CX} that given $0<\alpha<2n$, $1<p_1,p_2<\infty$, $1/p=1/p_1 + 1/p_2$  and $1/q=1/p - \alpha/n$, if
$(w_1^r, w_2^r) \in  {\bf A}_{{\bf P/r},q/r}$ for some $r>1$ with  $0<r\alpha<2n$, and $\mu_{\bf w} \in A_\infty$, then
$$[b,I_\alpha]_j: L^{p_1}(w_1^{p_1})\times L^{p_2}(w_2^{p_2})\to L^q(\mu_{\bf w}). $$
Moreover, the operator norm satisfies
\begin{equation}
\|[b,I_\alpha]_j\| \lesssim  \|b\|_{BMO}.\label{operatornorm}
\end{equation}
 Later on, in \cite{CW}, the result was improved and the explicitly stated bump condition involving $r>1$ was removed. This requires a simple argument based on reverse H\"older inequality, as used in the work \cite{LOPTT} when dealing with similar situation for the classes ${\bf A}_{\bf P}$. In fact, such condition is always satisfied: for $(w_1, w_2)  \in {\bf A}_{{\bf P},q}$ there exist an appropriate $r>1$, depending on
$(w_1, w_2)$, such that
$(w_1^r, w_2^r)  \in {\bf A}_{{\bf P/r},q/r}$; while it is also true that   $(w_1^r, w_2^r)  \in {\bf A}_{{\bf P/r},q/r}$ always implies $(w_1, w_2)  \in {\bf A}_{{\bf P},q}$ for all $r>1$.  \\

We now show two important properties of the weights we will be using, which in particular guarantee the boundedness of the commutators.

\begin{lemma} \label{conditions-weights}
{\it Let $1<p_1,p_2<\infty$, ${\bf P}=(p_1,p_2)$,  $0<\alpha< 2n$, $\frac\alpha{n} < \frac1{p_1}+\frac1{p_2}$,
and $q$ such that $\frac1q=\frac1{p_1}+\frac1{p_2}-\frac\alpha n$. Suppose that
$w_1^{\frac{p_1q}p},w_2^{\frac{p_2q}{p}}\in A_p$. Then, 
\begin{enumerate}[{\rm (i)}]
\item ${\bf w}=(w_1,w_2)\in {\bf A}_{{\bf P},q}$,
\item $\mu_{\bf w}=w_1^{q}w_2^{q}\in A_p\subset A_q$.
\end{enumerate}
}
\end{lemma}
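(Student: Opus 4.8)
The plan is to reduce both parts to the auxiliary single-variable weights $u_i:=w_i^{p_iq/p}$ ($i=1,2$), which lie in $A_p$ by hypothesis. The one elementary fact driving everything is that $\frac{p}{p_1}+\frac{p}{p_2}=p\big(\frac1{p_1}+\frac1{p_2}\big)=1$; hence $\mu_{\bf w}=w_1^qw_2^q=u_1^{p/p_1}u_2^{p/p_2}$ realizes $\mu_{\bf w}$ as a product of powers of the two $A_p$ weights $u_1,u_2$ with exponents $p/p_1,p/p_2$ summing to $1$.

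For part (ii) I would apply H\"older's inequality, with the conjugate exponents $p_1/p$ and $p_2/p$, separately to the two averages in the $A_p$ functional of $\mu_{\bf w}$, namely to $\dashint_Q u_1^{p/p_1}u_2^{p/p_2}$ and to $\dashint_Q u_1^{(1-p')p/p_1}u_2^{(1-p')p/p_2}$. This gives $[\mu_{\bf w}]_{A_p}\le[u_1]_{A_p}^{p/p_1}[u_2]_{A_p}^{p/p_2}<\infty$, so $\mu_{\bf w}\in A_p$. The inclusion $A_p\subseteq A_q$ is then automatic from the monotonicity of the Muckenhoupt scale, since $1/q=1/p-\alpha/n\le 1/p$ forces $p\le q$; this yields $\mu_{\bf w}\in A_p\subset A_q$.

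For part (i) I would first use H\"older's inequality (again with exponents $p_1/p,p_2/p$) on the leading factor to obtain $\dashint_Q\mu_{\bf w}\le\big(\dashint_Q u_1\big)^{p/p_1}\big(\dashint_Q u_2\big)^{p/p_2}$, which bounds the ${\bf A}_{{\bf P},q}$ expression by a product $B_1B_2$ with $B_i:=\big(\dashint_Q u_i\big)^{p/p_i}\big(\dashint_Q w_i^{-p_i'}\big)^{q/p_i'}$ depending only on $w_i$. It then suffices to bound each $B_i$ by a finite constant. The crux is to recognize $B_i$ as a power of a single scalar Muckenhoupt quantity: setting $r_i:=1+(p_i-1)q/p$, a short computation of the dual exponent gives $u_i^{1-r_i'}=w_i^{-p_i'}$ and $r_i/r_i'=r_i-1=(p_i-1)q/p$, so the $A_{r_i}$ condition for $u_i$ reads $\big(\dashint_Q u_i\big)\big(\dashint_Q w_i^{-p_i'}\big)^{(p_i-1)q/p}\le[u_i]_{A_{r_i}}$. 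Raising this to the power $p/p_i$ and using $(p_i-1)/p_i=1/p_i'$ produces exactly $B_i\le[u_i]_{A_{r_i}}^{p/p_i}$.

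The only remaining point --- and the one step I expect to require genuine care --- is that the hypothesis $u_i\in A_p$ really does control $[u_i]_{A_{r_i}}$. This is again a consequence of the nesting of the classes $A_r$: since $q\ge p$ we have $r_i=1+(p_i-1)q/p\ge p_i>p$, whence $A_p\subseteq A_{r_i}$ and $[u_i]_{A_{r_i}}\le[u_i]_{A_p}<\infty$. Therefore $B_1,B_2<\infty$ and $[{\bf w}]_{{\bf A}_{{\bf P},q}}\le B_1B_2<\infty$, which is (i). I anticipate that the exponent bookkeeping in choosing $r_i$ and verifying the identity $u_i^{1-r_i'}=w_i^{-p_i'}$ is the only delicate part; the rest is H\"older's inequality together with the monotonicity $A_p\subseteq A_r$ for $r\ge p$.
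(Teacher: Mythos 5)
Your proof is correct, and part (ii) coincides exactly with the paper's argument: H\"older with conjugate exponents $p_1/p,\,p_2/p$ applied to both averages in the $A_p$ functional of $\mu_{\bf w}$, giving $[\mu_{\bf w}]_{A_p}\le [u_1]_{A_p}^{p/p_1}[u_2]_{A_p}^{p/p_2}$, followed by $A_p\subset A_q$ from $p\le q$. For part (i) your route is genuinely, if mildly, different. Writing $u_i=w_i^{p_iq/p}$, the paper keeps the product average $\dashint_Q u_1^{p/p_1}u_2^{p/p_2}$ intact (it is literally $\dashint_Q \mu_{\bf w}$) and instead upgrades each dual factor $\bigl(\dashint_Q w_i^{-p_i'}\bigr)^{q/p_i'}$ to $\bigl(\dashint_Q u_i^{1-p_i'}\bigr)^{p/p_i'}$ by Jensen (using $q/p>1$ and $u_i^{1-p_i'}=w_i^{-p_i'q/p}$), thereby dominating $[{\bf w}]_{{\bf A}_{{\bf P},q}}$ by the vector constant $[(u_1,u_2)]_{{\bf A}_{\bf P}}$, which is finite because $u_i\in A_p\subset A_{p_i}$ and $A_{p_1}\times A_{p_2}\subset {\bf A}_{\bf P}$. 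You do the opposite: you split the product average by H\"older, leave the dual factors untouched, and recognize each block $B_i$ as $[u_i]_{A_{r_i}}^{p/p_i}$ with $r_i=1+(p_i-1)q/p$, controlled via the scalar nesting $A_p\subset A_{r_i}$ (valid since $r_i\ge p_i>p$). Your exponent bookkeeping checks out: $u_i^{1-r_i'}=u_i^{-1/(r_i-1)}=w_i^{-p_i'}$ and $(r_i-1)p/p_i=q/p_i'$. What each buys: your version stays entirely within scalar Muckenhoupt classes and yields the explicit bound $[{\bf w}]_{{\bf A}_{{\bf P},q}}\le [u_1]_{A_p}^{p/p_1}[u_2]_{A_p}^{p/p_2}$, whereas the paper's passes through the vector class ${\bf A}_{\bf P}$ and its known inclusion properties, which it cites elsewhere anyway; both are two applications of H\"older/Jensen plus monotonicity of the $A_r$ scale.
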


\begin{proof}
Note that $p<\min\{p_1,p_2\}$, so $(w_1^{\frac{p_1q}p},w_2^{\frac{p_2q}{p}})\in {\bf A}_{\bf P}$, and we have
    \begin{align*}
    \left(\dashint_Q \left( w_1w_2 \right)^q \right) & \prod_{i=1}^2\left( \dashint_Q  w_i^{-p_i'} \right)^{q/p_i'}\\
    & = \left( \dashint_Q  \left( w_1^{p_1q/p} \right)^{p/p_1} \left( w_2^{p_2q/p}\right)^{p/p_2} \right)
    \prod_{i=1}^2\left( \dashint_Q w_i^{-p_i'} \right)^{q/p_i'}\\
    & \leq \left(\dashint_Q \left( w_1^{p_1q/p} \right)^{p/p_1} \left( w_2^{p_2q/p} \right)^{p/p_2}\right)
    \prod_{i=1}^2\left( \dashint_Q  \left(w_i^{p_iq/p}\right)^{1-p_i'}\right)^{p/p_i'} 
    \\
    & = \left[\left( w_1^{p_1q/p},w_2^{p_2q/p} \right)\right]_{{\bf A}_{\bf P}} <\infty.
    \end{align*}
A quick application of H\"older to the $A_p$ condition shows that
$$[\mu_{\bf w}]_{A_p} =  \sup_{Q}\left(\dashint_Q w_1^{q}w_2^{q}\right)
\left(\dashint_Q (w_1w_2)^{q(1-p')}\right)^{p/p'} $$
$$\leq \left[ w_1^{\frac{p_1q}p} \right]_{A_{p}}^{\frac{p}{p_1}}
\left[ w_2^{\frac{p_2q}p} \right]_{A_p}^{\frac p{p_2}} < \infty, $$
and since $q>p$, we also have $w_1^qw_2^q\in A_q$.\\
\end{proof}

As in other works in the literature dealing with compactness of singular integrals (see \cite{BDMT} and the references therein), we find it convenient to
use smooth truncations of $I_\alpha$.  Following the construction in \cite{BDMT} it is possible to approximate $I_\alpha$ by operators $I_\alpha^\delta$
defined by a smooth kernel  $K^\delta(x,y,z)$ in $\R^{3n}$ such that
$$
K^\delta(x,y,z) = \frac{1}{(|x-y|^2+|x-z|^2)^{n-\alpha/2}}
$$
for $\max(|x-y|,|x-z|) > 2\delta$;
$$
K^\delta(x,y,z) = 0
$$
for $\max(|x-y|,|x-z|) < \delta$;
and
$$
|\partial^\gamma K^\delta(x,y,z)| \lesssim \frac{1} {(|x-y|+|x-z|)^{2n-\alpha-| \gamma |}}
$$
for all $(x,y,z)$ and all multi-indexes with $|\gamma|\leq 1$.\\

The operators $I_\alpha^\delta$ approximate $I_\alpha$ in the following sense.

\begin{lemma}\label{approximation}
If $b\in C_c^\infty$ and  ${\bf w}\in {\bf A}_{{\bf P},q}$, then
$$\lim_{\delta \to 0} \| [b,I_\alpha^\delta] -[b, I_\al]\|_{L^{p_1}(w_1^{p_1})\times L^{p_2}(w_2^{p_2}) \to L^p(\mu_{\bf w})}=0.$$
\end{lemma}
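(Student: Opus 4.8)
The plan is to exploit the linearity of the commutator in its operator slot together with the fact that a symbol $b\in C_c^\infty$ is Lipschitz. First I would note that the commutator depends linearly on the operator, so
$$[b,I_\al^\delta]_1(f,g) - [b,I_\al]_1(f,g) = (I_\al^\delta - I_\al)(bf,g) - b\,(I_\al^\delta - I_\al)(f,g),$$
which is itself the $[b,\cdot]_1$-commutator of the difference operator $I_\al^\delta - I_\al$. Writing $K$ for the kernel of $I_\al$ and $K^\delta$ for that of $I_\al^\delta$, this difference of commutators is the bilinear integral operator with kernel
$$L_\delta(x,y,z) := (K^\delta - K)(x,y,z)\,(b(y)-b(x)).$$
The two structural facts I would use are that $K^\delta - K$ is supported in $\{\max(|x-y|,|x-z|)\le 2\delta\}$ and that there $|K^\delta-K|\lesssim (|x-y|+|x-z|)^{-(2n-\al)}$ (using $|x-y|^2+|x-z|^2\ge \tfrac12(|x-y|+|x-z|)^2$ together with the $|\gamma|=0$ size bound on $K^\delta$), and that, since $b$ is Lipschitz, $|b(y)-b(x)|\le \|\nabla b\|_\infty |x-y|$, with $|x-y|\le 2\delta$ on the support.

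Combining these on the support of the kernel yields
$$|L_\delta(x,y,z)| \lesssim \|\nabla b\|_\infty \frac{|x-y|}{(|x-y|+|x-z|)^{2n-\al}} \le \|\nabla b\|_\infty \frac{1}{(|x-y|+|x-z|)^{2n-(\al+1)}},$$
so that $|[b,I_\al^\delta]_1(f,g)(x) - [b,I_\al]_1(f,g)(x)|$ is dominated (for $f,g\in C_c^\infty$, then by density) by a truncated fractional-integral expression in $|f|,|g|$. The heart of the argument is to convert the near-diagonal truncation into a genuine quantitative factor of $\delta$ and to recognize the bilinear maximal function $\M_\al$. I would decompose $\{\max(|x-y|,|x-z|)\le 2\delta\}$ into the dyadic shells $\{2^{-k}\delta \le \max(|x-y|,|x-z|) < 2^{1-k}\delta\}$, $k\ge 0$; on the $k$-th shell the denominator is comparable to $(2^{-k}\delta)^{2n-\al-1}$ and the shell sits inside $Q_k\times Q_k$ with $Q_k = B(x,2^{1-k}\delta)$, so its contribution is bounded by
$$(2^{-k}\delta)^{-(2n-\al-1)}\Big(\int_{Q_k}|f|\Big)\Big(\int_{Q_k}|g|\Big) \lesssim (2^{-k}\delta)\,|Q_k|^{\al/n}\Big(\dashint_{Q_k}|f|\Big)\Big(\dashint_{Q_k}|g|\Big)\le (2^{-k}\delta)\,\M_\al(f,g)(x),$$
after accounting for $|Q_k|^2\approx (2^{-k}\delta)^{2n}$ and noting $-(2n-\al-1)+2n=\al+1>0$. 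Summing the geometric series over $k\ge 0$ gives the pointwise estimate $|[b,I_\al^\delta]_1(f,g)(x) - [b,I_\al]_1(f,g)(x)| \lesssim \|\nabla b\|_\infty\,\delta\,\M_\al(f,g)(x)$.

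Finally I would invoke the boundedness $\M_\al : L^{p_1}(w_1^{p_1})\times L^{p_2}(w_2^{p_2})\to L^q(\mu_{\bf w})$ for ${\bf w}\in{\bf A}_{{\bf P},q}$ recalled above, which produces
$$\|[b,I_\al^\delta]_1 - [b,I_\al]_1\|_{L^{p_1}(w_1^{p_1})\times L^{p_2}(w_2^{p_2})\to L^q(\mu_{\bf w})} \lesssim \|\nabla b\|_\infty\,\delta,$$
with implicit constant independent of $\delta$, and the right-hand side tends to $0$ as $\delta\to 0$. The main obstacle, and the reason the lemma requires a \emph{smooth, compactly supported} symbol rather than a general $BMO$ one, is exactly the Lipschitz gain $|b(y)-b(x)|\lesssim |x-y|\le 2\delta$: it is what upgrades the mere near-diagonal vanishing of $K^\delta-K$ into a factor of $\delta$. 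The remainder is bookkeeping in the dyadic sum, where the positivity of the kernels lets me replace the truncated operator by $\M_\al$; the positivity is the feature that, as the introduction remarks, is unavailable for the Calder\'on--Zygmund analogue.
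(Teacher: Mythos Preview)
Your argument is correct and follows essentially the same route the paper has in mind: the paper leaves the proof to the reader, pointing to Lemma~2.1 in \cite{BDMT}, and the computation there (mirrored in the estimate of the term $II$ in the proof of Theorem~\ref{tfae}) is precisely your Lipschitz gain $|b(y)-b(x)|\lesssim |x-y|$ on the near-diagonal support of $K^\delta-K$, followed by a dyadic shell decomposition that lands on $\M_\al(f,g)$ and its weighted $L^{p_1}\times L^{p_2}\to L^q$ bound for ${\bf w}\in {\bf A}_{{\bf P},q}$. One cosmetic point: the target space in the lemma should read $L^q(\mu_{\bf w})$, as you implicitly use; the $L^p$ in the displayed statement is a typo.
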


\smallskip

The proof of this result is very similar to that of Lemma 2.1 in \cite{BDMT} and it is left to the reader.\\

We use the following definition of compactness in the bilinear setting. A bilinear operator is
{\it compact}  from $L^{p_1}(w_1) \times  L^{p_2}(w_2) \to L^{p_3}(w_3)$, if it maps the set
$$\{(f,g):\|f\|_{L^{p_1}(w_1)}\leq 1,\|g\|_{L^{p_2}(w_2)}\leq 1\}$$
into a pre-compact set  in $L^{p_3}(w_3)$. See \cite{BT} for natural properties of compact bilinear operator.\\

A criteria for compactness in weighted $L^q$ spaces is provided by the following weighted version of the Frech\'et-Kolmogorov-Riesz theorem.
We refer to the works by  Hanche-Olsen and Holden\cite{HOH} and  Clop and Cruz \cite{CC}.\\

\emph{Let $1<q<\infty$ and $w\in A_q$ and let $\mathcal K\subset L^q(w)$. If
\begin{align}
&\mathcal K \mbox{ is bounded in }L^q(w); \label{1compact}\\
&\displaystyle\lim_{A\ra \infty} \int_{|x|>A}|f(x)|^q\,w(x) \,dx=0 \mbox{ uniformly for }f\in \mathcal K; \label{2compact}\\
&\displaystyle\lim_{t\ra 0}\|f(\cdot+t)-f\|_{L^q(w)}=0 \mbox{ uniformly for }f\in \mathcal K; \label{3compact}
\end{align}
then $\mathcal K$ is pre-compact in $L^q(w)$.\\
}

A compact operator is bounded, so by the results in \cite{Chaff} the symbol of a compact operator must be at least in $BMO$.
It was also proved in \cite{BDMT} that $[b,I_\alpha]_j(f,g)$ is compact when the symbol $b$ is in $CMO$. The result we will show
establishes the necessity of this condition as well as the compactness of the commutators on appropriate weighted spaces.

\section{Main result}

\begin{theorem} \label{tfae}
Let $1<p_1,p_2<\infty$, ${\bf P}=(p_1,p_2)$,  $\frac1{p}=\frac1{p_1}+\frac1{p_2}$, $0<\alpha< 2n$, $\frac\alpha{n} < \frac1{p_1}+\frac1{p_2}$,
and $q$ such that $\frac1q=\frac1{p_1}+\frac1{p_2}-\frac\alpha n$ and $1<p,q<\infty$. Then the following are equivalent,
\begin{enumerate}[{\rm (i)}]
\item $b\in CMO$.
\item $[b,I_\alpha]_1:L^{p_1}(w^{p_1})\times L^{p_2}(w^{p_2})\to L^q(w_1^qw_2^q)$ is a compact operator for all ${\bf w} =(w_1, w_2)$
such that $w_1^{\frac{p_1q}p},w_2^{\frac{p_2q}{p}}\in A_p$.
\item $[b,I_\alpha]_1:L^{p_1}\times L^{p_2}\to L^q$ is a compact operator.
\end{enumerate}
\end{theorem}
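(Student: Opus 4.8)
The plan is to establish the cycle of implications $(i)\Rightarrow(ii)\Rightarrow(iii)\Rightarrow(i)$.

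For $(i)\Rightarrow(ii)$ I would reduce, in two stages, to an operator that is manifestly compact. First, since $b\in CMO$, pick $b_k\in C_c^\infty$ with $\|b-b_k\|_{BMO}\to 0$. Under the hypothesis $w_1^{p_1q/p},w_2^{p_2q/p}\in A_p$, Lemma~\ref{conditions-weights} guarantees that ${\bf w}\in {\bf A}_{{\bf P},q}$ and $\mu_{\bf w}\in A_q$, so the commutators are bounded and, by the operator-norm estimate \eqref{operatornorm}, $[b_k,I_\alpha]_1\to[b,I_\alpha]_1$ in the operator norm on $L^{p_1}(w_1^{p_1})\times L^{p_2}(w_2^{p_2})\to L^q(\mu_{\bf w})$. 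Because a norm limit of compact bilinear operators is compact (see \cite{BT}), it suffices to treat a fixed smooth, compactly supported symbol $b=b_k$. For such $b$, Lemma~\ref{approximation} lets me replace $[b,I_\alpha]_1$ by the smoothly truncated $[b,I_\alpha^\delta]_1$ up to an arbitrarily small operator-norm error, reducing once more to showing each $[b,I_\alpha^\delta]_1$ is compact. I would then verify the three hypotheses \eqref{1compact}--\eqref{3compact} of the weighted Fréchet--Kolmogorov--Riesz theorem for the image of the unit ball in $L^q(\mu_{\bf w})$, which is legitimate since $\mu_{\bf w}\in A_q$. Boundedness is immediate; the uniform decay \eqref{2compact} and the uniform equicontinuity \eqref{3compact} are the main computational content and follow from the compact support of $b$, the smoothness together with the size and first-derivative bounds on $K^\delta$, and the weighted boundedness of the maximal function $\M_\alpha$ to dominate the resulting integrals.

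The implication $(ii)\Rightarrow(iii)$ is the trivial specialization to $w_1\equiv w_2\equiv 1$, which satisfies $1\in A_p$ and returns exactly the unweighted compactness statement.

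The heart of the theorem is the necessity $(iii)\Rightarrow(i)$. Since a compact operator is bounded, Chaffee's theorem \cite{Chaff} already yields $b\in BMO$, so it remains only to verify the three oscillation conditions \eqref{1cmo}--\eqref{3cmo} characterizing $CMO$. I would argue by contradiction: if one of these limits fails, there are $\eps>0$ and a sequence of cubes $Q_j$ — shrinking for \eqref{1cmo}, expanding for \eqref{2cmo}, or translated off to infinity for \eqref{3cmo} — with $\dashint_{Q_j}|b-b_{Q_j}|\,dx\ge \eps$. From each $Q_j$ I would build a normalized pair $(f_j,g_j)$, taking $f_j$ proportional to $\mathbbm 1_{Q_j}$ (or to $\operatorname{sgn}(b-b_{Q_j})\mathbbm 1_{Q_j}$) and $g_j$ proportional to $\mathbbm 1_{Q_j}$, rescaled so that $\|f_j\|_{p_1}\|g_j\|_{p_2}\lesssim 1$, and then produce a lower bound $\|[b,I_\alpha]_1(f_j,g_j)\|_{L^q}\gtrsim \eps$ concentrated on a set of measure comparable to $|Q_j|$.

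The main obstacle is twofold. First, one must extract the lower bound from the oscillation of $b$: writing $[b,I_\alpha]_1(f,g)(x)=\iint (b(y)-b(x))K(x,y,z)f(y)g(z)\,dy\,dz$, the positivity of $K$ is essential, since it prevents cancellation and lets the oscillation of $b$ over $Q_j$ transfer directly to a pointwise lower bound on the commutator; this is precisely the reason the argument does not carry over to Calderón--Zygmund operators, which lack positive kernels. Second, one must arrange that $\{[b,I_\alpha]_1(f_j,g_j)\}$ has no convergent subsequence, contradicting compactness: for \eqref{2cmo} and \eqref{3cmo} this is achieved by driving the cubes $Q_j$ far apart so that the images live on essentially disjoint regions and stay separated in $L^q$, while for \eqref{1cmo} the images are separated by their incompatible scales. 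Assembling these three contradictions forces all of \eqref{1cmo}--\eqref{3cmo}, hence $b\in CMO$, completing the cycle.
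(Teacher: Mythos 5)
Your architecture coincides with the paper's: for (i)$\Rightarrow$(ii) the double reduction (first $CMO\to C_c^\infty$ via \eqref{operatornorm}, then $I_\alpha\to I_\alpha^\delta$ via Lemma~\ref{approximation}) followed by the weighted Fr\'echet--Kolmogorov--Riesz criterion is exactly what is done there, and (iii)$\Rightarrow$(i) is the same Uchiyama/Chen--Ding--Wang contradiction scheme. There is, however, one concrete missing idea in the necessity argument that makes your construction fail as written. The commutator splits as
$$[b,I_\alpha]_1(f_j,g_j)=I_\alpha((b-b_{Q_j})f_j,g_j)-(b-b_{Q_j})I_\alpha(f_j,g_j),$$
and the positivity of the kernel only gives the lower bound $\gtrsim \eps\,|Q_j|^{1/p_1'+1/p_2'}|x-y_j|^{-(2n-\alpha)}$ for the \emph{first} term. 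The second term must then be shown to be negligible, and with either of your candidates for $f_j$ (a multiple of $\chi_{Q_j}$ or of $\operatorname{sgn}(b-b_{Q_j})\chi_{Q_j}$) it is not: one only gets $|I_\alpha(f_j,g_j)(x)|\lesssim |Q_j|^{1/p_1'+1/p_2'}|x-y_j|^{-(2n-\alpha)}$, i.e.\ the \emph{same} order as the main term, multiplied by $|b(x)-b_{Q_j}|$, whose average over the annulus $\{\gamma_1 d_j<|x-y_j|<\gamma_2 d_j\}$ grows like $\log\gamma_2$ for a general $BMO$ function. Since $\gamma_2$ must be taken large while $\eps$ is small, this error term swamps the lower bound and no separation constant $\gamma_3$ can be extracted.

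The paper's fix is to take $f_j=|Q_j|^{-1/p_1}\left(\operatorname{sgn}(b-b_{Q_j})-c_0\right)\chi_{Q_j}$ with $c_0=\dashint_{Q_j}\operatorname{sgn}(b-b_{Q_j})$, so that $f_j$ simultaneously satisfies $f_j\,(b-b_{Q_j})\ge 0$ (preserving the lower bound coming from kernel positivity) \emph{and} $\int f_j=0$. The mean-zero property yields the improved decay $|I_\alpha(f_j,g_j)(x)|\lesssim |Q_j|^{1/p_1'+1/p_2'+1/n}|x-y_j|^{-(2n-\alpha+1)}$ (estimate \eqref{Est3}), and the extra factor $|Q_j|^{1/n}/|x-y_j|$ beats the logarithmic growth of $|b-b_{Q_j}|$, giving \eqref{Est4} and hence the constants $\gamma_1,\gamma_2,\gamma_3,\beta$ used in the separation argument. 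Without this normalization your sequence $\{[b,I_\alpha]_1(f_j,g_j)\}$ cannot be shown to stay uniformly apart in $L^q$. Everything else in your outline --- the trivial (ii)$\Rightarrow$(iii), the appeal to \cite{Chaff} for $b\in BMO$, and the three separation mechanisms corresponding to \eqref{1cmo}--\eqref{3cmo} --- matches the paper.
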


\begin{proof} To prove that (i) implies (ii),  it is enough to assume that $b\in C^\infty_c$, and show that the image of $B_1(L^{p_1}(w_1^{p_1}))\times B_1(L^{p_2}(w_2^{p_2}))$ under $[b,I_\alpha^\delta]_1$ verifies  the Frech\'et-Kolmogorov-Riesz conditions in $L^q(\mu_{\bf w})$.\footnote{This follows from Lemma \ref{approximation}, the norm estimate \eqref{operatornorm}, and basic properties of compact operators.} The approach for this part is similar to that in \cite{BDMT} but we need to carefully use the properties of the weights established in Lemma \ref{conditions-weights}.

Note that \eqref{1compact} is immediate since for $b\in C^\infty_c$, $[b,I_\alpha^\delta]_1$ is bounded  from $L^{p_1}(w_1^{p_1})\times L^{p_2}(w_2^{p_2})$ to $L^q(\mu_{\bf w})$, because ${\bf w}\in {\bf A}_{{\bf P},q}$ by Lemma \ref{conditions-weights}.\\

 To show that \eqref{2compact} holds, choose $r$ large so that $\supp b \subset B_r(0)$, then for $|x|>R\geq\max\{2r,1\}$, we have
 \begin{align*}
    |[b,&I_\alpha^\delta](f,g)(x)| \lesssim \int_{\supp b}\int_{\R^n}\frac{|b(y)| |f(y)| |g(z)|}{(|x-y|+|x-z|)^{2n-\alpha}}dzdy\\
    &\lesssim\|b\|_\infty\int_{\supp b}|f(y)| \int_{\R^n} \frac{|g(z)|}{(|x|+|x-z|)^{2n-\alpha}}dzdy\\
    &\lesssim\|b\|_\infty \|f\|_{L^{p_1}(w_1^{p_1})} \left(\int_{B_r(0)} w_1^{-p_1'}dy\right)^{1/p_1'} \int_{\R^n} \frac{|g(z)|}{(|x|+|x-z|)^{2n-\alpha}}dz\\
    &\lesssim\frac{\|b\|_\infty} {|x|^{n-\alpha}} \|f\|_{L^{p_1}(w_1^{p_1})} \left(\int_{B_r(0)}w_1^{-p_1'}dy\right)^{1/p_1'}
    \int_{\R^n} \frac{|g(z)|}{(|x|+|x-z|)^{n}}dz\\
    &\lesssim\frac{\|b\|_\infty}{|x|^{n-\alpha}}\|f\|_{L^{p_1}(w_1^{p_1})} \left( \int_{B_r(0)}w_1^{-p_1'}dy\right)^{1/p_1'}
    \int_{\R^n}\frac{|g(z)|}{(1+|z|)^{n}}dz\\\
    &\lesssim\frac{\|b\|_\infty}{|x|^{n-\alpha}}\|f\|_{L^{p_1}(w_1^{p_1})}\|g\|_{L^{p_1}(w_2^{p_2})}\left(\int_{B_r(0)}w_1^{-p_1'}dy\right)^{1/p_1'}
    \int_{\R^n}\frac{w_2^{-p_2'}}{(1+|z|)^{np_2'}}dz.
    \end{align*}
 Note now, that since $w_2^{p_2q/p}\in A_p\subset A_{p_2}$, we have that $w_2^{-\frac qpp_2'}=w_2^{(p_2q/p)(1-p_2')}$ is in $A_{p_2'}$, and since $q/p>1$, we have that $w_2^{-p_2'}\in A_{p_2'}$ as well. This gives us that $$\int_{\R^n}\frac{w_2^{-p_2'}}{(1+|z|)^{np_2'}}dz<\infty,$$ and so
    $$|[b,I_\alpha^\delta](f,g)(x)|\lesssim\frac1{|x|^{n-\alpha}}.$$

    Raising both sides of the last inequality to the power $q$ and integrating over $|x|>R$ we have
    $$
    \int_{|x|>R}|[b,I_\alpha^\delta](f,g)(x)|^q \mu_{\bf w}\,dx\lesssim \int_{|x|>R}\frac{\mu_{\bf w}}{|x|^{(n-\alpha)q}}dx=\int_{|x|>R}\frac{\mu_{\bf w}}{|x|^{\frac{n-\alpha}{n-p\alpha}np}}dx.
    $$
    Note now that $\frac{n-\alpha}{n-p\alpha}>1$, and that $\mu_{\bf w}$ is an $A_p$ weight by Lemma \ref{conditions-weights}, so this quantity tends to zero as $R\to\infty$.\\

    To show \eqref{3compact}, notice that by adding and subtracting
        $$\int_{\R^n}\int_{\R^n}b(x+t) K^\delta(x,y,z) f(y)g(z) \, dydz,$$
    we can compute
    \begin{align*}
        [b,&I_\alpha^\delta](f,g)(x+t)-[b,I_\alpha^\delta](f,g)(x)\\
                &=\left(b(x)-b(x+t)\right)\int_{\R^n}\int_{\R^n} K^\delta(x,y,z) f(y)g(z) \, dydz \\
        &\,\,\,\, +  \int_{\R^n}\int_{\R^n}\left(b(y)-b(x+t)\right)f(y)g(z)   (K^\delta(x+t,y,z) - K^\delta(x,y,z))\, dydz      \\
                &=I(x,t)+II(x,t).
    \end{align*}
    For $I$, we simply have $$|I(x,t)|\leq|t|\|\nabla b\|_\infty I_\alpha(f,g)(x),$$ and since $I_\alpha$ is bounded from $L^{p_1}(w_1^{p_1})\times L^{p_2}(w_2^{p_2})$ to $L^q(\mu_{\bf w})$, we have
    $$\|I(\cdot,t)\|_{L^q(\mu_{\bf w})}\lesssim|t|.$$

 We now move on to the control of $II$. We can assume $t<\delta/4$. Hence,
 because of the properties of $K^\delta$, if $\max(|x-y|, |x-z|) \leq \delta/2$
  we have
    $$K^\delta(x+t,y,z)-K^\delta(x,y,z)= 0,$$
  while for $\max(|x-y|, |x-z|) > \delta/2$  we have $\max(|x-y|, |x-z|) >2t$. We can then estimate $II$ by
   \begin{align*}
& \left| \iint (b(y)-b(x+t))(K^\delta(x+t,y,z)-K^\delta(x,y,z))f(y)g(z)\,dydz \right| \\
&\lesssim \|b\|_\infty|t|\iint_{\max\{|x-y|,|x-z|\}>\delta/2}\frac{|f(y)||g(z)|}{(|x-y|+|x-z|)^{2n-\alpha+1}}\,dydz\\
&\lesssim \|b\|_\infty|t| \sum_{j\geq 0}\iint_{2^{j-1}\delta < \max\{|x-y|,|x-z|\} \leq 2^{j}\delta}\frac{|f(y)||g(z)|}{(|x-y|+|x-z|)^{2n-\alpha+1}}\,dydz\\
& \lesssim \|b\|_\infty|t| \sum_{j\geq 0} \left( \int_{2^{j-1}\delta \leq|x-z| \leq2 ^{j}\delta} \frac{|f(y)|}{ |x-y|^{2n-\alpha+1} }   \,dy
 \int_ {|x-y| \leq 2 ^{j}\delta} |g(z)|\,dz
\right. \\
&
\,\,\, + \left.  \int_{|x-y| \leq 2 ^{j}\delta}|f(y)| \,dy \int_{ 2 ^{j-1}\delta\leq |x-z| \leq 2 ^{j}\delta} \frac{ |g(z)|}{  |x-z|^{2n-\alpha+1} }   \,dz\right) \\
&
\lesssim   \| b\|_{L^\infty} |t|
\sum_{j\geq 0}  (2^{j}\delta)^{-2n+\alpha-1}\left(
 \int_{|x-y| \lesssim 2^{j} \delta} {|f(y)|}\,dy\,
\int_{|z-y| \lesssim 2^{j}\delta} {|g(z)|}\, dz\right)\\
&
\lesssim   \|b\|_{L^\infty} \, \frac{|t|}{\delta} \,
\sum_{j \geq 0} 2^{-j}  (2^j\delta)^\alpha \left(
 \dashint_{|x-y| \lesssim 2^{j} \delta} {|f(y)|}\,dy\,
\dashint_{|z-y| \lesssim 2^{j}\delta} {|g(z)|}\, dz\right)\\
&
\lesssim   \|b\|_{L^\infty} \, \frac{|t|}{\delta} \,
 \M_\alpha(f,g)(x).
\end{align*}
It follows that
$$\|II(\cdot,t)\|_{L^q(\mu_{\bf w})}\lesssim|t|.$$

Obviously (ii) implies (iii). So it remains to show that (iii) implies (i). To do so we will adapt some arguments from \cite{CDW},
which in turn are based on the original work in \cite{U}. The approach is as follows: we will show that if we assume that $[b,I_\alpha]_1$ is compact and
$b$ (a fortiori in $BMO$ by the the results in \cite{Chaff}) fails to satisfy one of the conditions \eqref{1cmo}-\eqref{3cmo}, then one can construct sequences of functions,
$\{f_j\}_j$ uniformly bounded on $L^{p_1}$ and $\{g_j\}_j$ uniformly bounded on $L^{p_2}$, such that $\{[b,I_\alpha]_1(f_j,g_j)\}_j$
has no convergent subsequence, which contradicts the compactness assumption. It then follows that if $[b,I_\alpha]_1$ is compact,
$b$ must satisfy all three conditions \eqref{1cmo}-\eqref{3cmo} and hence be an element of $CMO$.

Before we construct the sequences, we observe that by linearity in $b$, it is enough to prove that
(iii) implies (i) for $b$ real valued and with $\|b\|_{BMO}=1$. So we will assume such conditions.

Given a cube $Q_j$ such that
  \begin{equation}
  \frac{1}{|Q_j|}\int_{Q_j}|b(x)-b_{Q_j}|dx\geq\eps,\label{awayfromzero}
  \end{equation}
    for some $\eps>0$, we define
    $$f_j(y)=|Q_j|^{-1/p_1}\left({\rm sgn}(b(y)-b_{Q_j})-c_0\right)\chi_{Q_j}(y),$$ where $c_0=|Q_j|^{-1}\int_{Q_j}\text{sgn}(b(y)-b_{Q_j})dy$. Note that $-1<c_0<1$, and from this we see that $f_j$ has the following properties,
    $$\text{supp}f_j\subset Q_j,$$
    $$f_j(y)(b(y)-b_{Q_j})\geq0,$$
    $$\int f_j(y)dy=0,$$
    $$\int (b(y)-b_{Q_j})f_j(y)dy=|Q_j|^{-1/p_1}\int_{Q_j}|b(y)-b_{Q_j}|dy$$
    $$|f_j(y)|\leq 2|Q_j|^{-1/p_1}$$
This last property gives us that $\|f_j\|_{L^{p_1}}\leq2$. For the other functions, we will simply define
 $$g_j=\frac{\chi_{Q_j}}{|Q_j|^{1/p_2}},$$
which satisfies $\|g_j\|_{L^{p_2}}=1$.

Next we establish several technical estimates. For a cube $Q_j$ with center $y_j$ and satisfying \eqref{awayfromzero} for some $\eps>0$, $f_j$ and $g_j$ as above, and all  $x\in\(2\sqrt n Q_j\)^{c}$,
the following point-wise estimates hold:

    \begin{align}
    |I_\alpha((b-b_{Q_j})f_j,g_j)(x)|&\lesssim |Q_j|^{\frac{1}{p'_1}+\frac{1}{p'_2}}|x-y_j|^{-2n+\alpha},  \label{Est1}\\
    |I_\alpha((b-b_{Q_j})f_j,g_j)(x)|&\gtrsim \eps|Q_j|^{\frac{1}{p'_1}+\frac{1}{p'_2}}|x-y_j|^{-2n+\alpha}\label{Est2},\\
    |I_\alpha(f_j,g_j)(x)|&\lesssim |Q_j|^{\frac{1}{p'_1}+\frac{1}{p'_2}+\frac1n}|x-y_j|^{-2n+\alpha-1},\label{Est3}
    \end{align}
where the constants involved are independent of $b, f_j, g_j$ and $\epsilon$.

To prove \eqref{Est1}, we use that   $|x-y_j| \approx |x-y|$ for all $y\in Q_j$ and that $\|b\|_{BMO}=1$ to obtain

\begin{align*}
    |I_\alpha((b-b_{Q_j})&f_j,g_j)(x)|=\left|\int\int\frac{(b(y)-b_{Q_j})f_j(y)g_j(z)}{\(|x-y|^2+|x-z|^2\)^{n-\alpha/2}}dydz\right|\\
       &\lesssim \frac{1}{|Q_j|^{\frac{1}{p_1}+\frac{1}{p_2}}}|x-y_j|^{-2n+\alpha}\int_{Q_j}\int_{Q_j}|b(y)-b_{Q_j}|dydz\\
    &\lesssim |Q_j|^{\frac{1}{p'_1}+\frac{1}{p'_2}}|x-y_j|^{-2n+\alpha}.
    \end{align*}
Using that $(b(y)-b_{Q_j})f_j(y)\geq0$, we can also estimate
 \begin{align*}
    |I_\alpha((b-b_{Q_j})&f_j,g_j)(x)|=\left|\int\int\frac{(b(y)-b_{Q_j})f_j(y)g_j(z)}{\(|x-y|^2+|x-z|^2\)^{n-\alpha/2}}dydz\right|\\
    &\gtrsim |Q_j|^{1-\frac{1}{p_2}}|x-y_j|^{-2n+\alpha}\left|\int_{Q_j}(b(y)-b_{Q_j})f_j(y)dy\right|\\
    &= |Q_j|^{1-\frac{1}{p_2}}|x-y_j|^{-2n+\alpha}\int_{Q_j}(b(y)-b_{Q_j})f_j(y)dy\\
    &= |Q_j|^{1-\frac{1}{p_2}}|x-y_j|^{-2n+\alpha}|Q_j|^{1-\frac{1}{p_1}}\frac1{|Q_j|}\int_{Q_j}|(b(y)-b_{Q_j})|dy\\
    &\geq |Q_j|^{\frac{1}{p'_1}+\frac{1}{p'_2}}|x-y_j|^{-2n+\alpha}\eps,
    \end{align*}
which gives \eqref{Est2}. Finally using that $f_j$ has mean zero we obtain \eqref{Est3} in the following way,
\begin{align*}
    |&I_\alpha(f_j,g_j)(x)|=\left|\int\int\frac{f_j(y)g_j(z)}{\(|x-y|^2+|x-z|^2\)^{n-\alpha/2}}dydz\right|\\
    &=\left|\int\(\int\frac{f_j(y)g_j(z)}{\(|x-y|^2+|x-z|^2\)^{n-\alpha/2}}-
    \frac{f_j(y)g_j(z)}{\(|x-y_j|^2+|x-z|^2\)^{n-\alpha/2}}dy\)dz\right|\\
    &\lesssim \int\int\frac{|y-y_j||f_j(y)|g_j(z)}{\(|x-y_j|+|x-z|\)^{2n-\alpha+1}}dy dz\\
    &\lesssim \frac{|Q_j|^{\frac1n}}{|x-y_j|^{2n-\alpha+1}}\int\int|f_j(y)|g_j(z)dydz\\
        &\lesssim |Q_j|^{\frac1{p'_1}+\frac{1}{p'_2}+\frac1n}|x-y_j|^{-2n+\alpha-1}.
    \end{align*}

Following  \cite{U} and \cite{CDW}, we now use the above point-wise estimates \eqref{Est1}-\eqref{Est3} to prove some $L^q$-norm inequalities
for $[b,I_\alpha]_1(f_j,g_j)$.

For a cube $Q_j$ with center $y_j$, side length $d_j$, and satisfying \eqref{awayfromzero} for some $\eps>0$;  and $f_j$ and $g_j$  defined as above; there exist constants $\gamma_2>\gamma_1>2$, and $\gamma_3>0$, depending only on $p_1,\ p_2,\ n,$ and $\eps$, such that
    \begin{align}
    \(\int_{\gamma_1d_j<|x-y_j|<\gamma_2d_j}|[b,I_\alpha]_1(f_j,g_j)(y)|^qdy\)^{1/q}&\geq\gamma_3\label{C11}\\
    \(\int_{|x-y_j|>\gamma_2d_j}|[b,I_\alpha]_1(f_j,g_j)(y)|^qdy\)^{1/q}&\leq\frac{\gamma_3}4\label{C12}
    \end{align}

Starting with some  $\tilde\gamma_1>16$, using \eqref{Est3} and the fact that $2n-\alpha-n/q>0$ (since $\frac1{p_1}+\frac1{p_2}<2$), we have,
    \begin{align*}
  &  \(\int_{|x-y_j|>\tilde\gamma_1d_j}|(b(x)-b_{Q_j})I_\alpha(f_j,g_j)(x)|^qdx\)^{\frac{1}{q}}\notag\\
    &\leq C|Q_j|^{\frac1{p'_1}+\frac{1}{p'_2}+\frac1n}
    \sum_{s=\lfloor\log_2(\tilde\gamma_1)\rfloor}^\infty\(\int_{2^sd_j<|x-y_j|<2^{s+1}d_j}\frac{|b(x)-b_{Q_j}|^q}{|x-y_j|^{q(2n-\alpha+1)}}\)^{\frac1q}\notag\\
    &\leq C|Q_j|^{\frac1{p'_1}+\frac{1}{p'_2}+\frac1n}\times \\
     &\,\,\,\,\,\,
    \sum_{s=\lfloor\log_2(\tilde\gamma_1)\rfloor}^\infty2^{-s(2n-\alpha+1)}|Q_j|^{-2+\frac\alpha n-\frac1n}\(\int_{2^sd_j<|x-y_j|<2^{s+1}d_j}|b(x)-b_{Q_j}|^q\)^{\frac1q}\notag\\
    &\leq C\sum_{s=\lfloor\log_2(\tilde\gamma_1)\rfloor}^\infty s2^{-s(2n-\alpha-\frac nq+1)}\notag\\
    &\leq C\sum_{s=\lfloor\log_2(\tilde\gamma_1)\rfloor}^\infty 2^{-s(2n-\alpha-\frac nq+\frac12)},
    \end{align*}
 where we have used that for $b\in$ BMO,
    $$\(\int_{2^sd_j<|x-y_j|<2^{s+1}d_j}|b(x)-b_{Q_j}|^qdx\)^{\frac1q}\lesssim s2^{sn/q}|Q_j|^{1/q},$$
    and that $s\leq2^{s/2}$ for $4\leq\lfloor\log_2(\tilde\gamma)\rfloor\leq s$. We thus obtain
  \begin{equation}
    \(\int_{|x-y_j|>\tilde\gamma_1d_j}|(b(x)-b_{Q_j})I_\alpha(f_j,g_j)(x)|^qdx\)^{\frac{1}{q}}
    \leq C \tilde\gamma_1^{-(2n-\alpha-\frac nq+\frac12)}.\label{Est4}
    \end{equation}

    Next, for $\tilde\gamma_2>\tilde\gamma_1$, using \eqref{Est2} and \eqref{Est4}, we obtain the following,
    \begin{align}
    & \(\int_{\tilde\gamma_1d_j<|x-y_j|<\tilde\gamma_2d_j}|[b,I_\alpha]_1(f_j,g_j)(x)|^qdx\)^{\frac1q} \nonumber\\
    &\geq C\(\int_{\tilde\gamma_1d_j<|x-y_j|<\tilde\gamma_2d_j}|I_\alpha\((b-b_Q)f_j,g_j\)(x)|^qdx\)^{\frac1q} \nonumber\\
    &\ \ \ \ -C\(\int_{\tilde\gamma_1d_j<|x-y_j|}|(b(x)-b_Q)I_\alpha(f_j,g_j)(x)|^qdx\)^{\frac1q} \nonumber\\\
    &\geq C\eps|Q_j|^{\frac{1}{p_1'}+\frac{1}{p_2'}}\(\int_{\tilde\gamma_1d_j<|x-y_j|<\tilde\gamma_2d_j}|x-y_j|^{q(-2n+\alpha)}dx\)^{\frac1q} \nonumber\\
    &\ \ \ \ -C\tilde\gamma_1^{(-2n+\alpha+n/q-1/2)}\nonumber\\
    &\geq C\eps\(\tilde\gamma_1^{-2nq+n+\alpha q}-\tilde\gamma_2^{-2nq+n+\alpha q}\)^{\frac1q}-C\tilde\gamma_1^{(-2n+\alpha+n/q-1/2)}. \label{Est4'}
    \end{align}
Using \eqref{Est4} and \eqref{Est4'} we see that we can select $\gamma_1, \gamma_2$ in place of $\tilde \gamma_1, \tilde \gamma_2$, with
$\gamma_2>>\gamma_1$, so that \eqref{C11} and \eqref{C12} are verified for some $\gamma_3>0$.\\

 The final technical estimate we need is the following.      Given $\gamma_1,\gamma_2$ in \eqref{C11} and \eqref{C12},
  there exists a $0<\beta<<\gamma_2$ depending only on $p_1,\ p_2,\ n,$ and $\eps$ such that for any $E$ measurable such that
  $$E\subset\{x:\gamma_1d_j<|x-y_j|<\gamma_2d_j\}$$
   and $|E|/|Q_j|<\beta^n$,
   we have
    \begin{align}
    \(\int_E|[b,I_\alpha]_1(f_j,g_j)(y)|^qdy\)^{1/q}&\leq\frac{\gamma_3}4.\label{C2}
    \end{align}

To prove this last inequality we note that if  $E\subset\{x:\gamma_1d_j<|x-y_j|<\gamma_2d_j\}$ is measurable, we can use \eqref{Est1} and \eqref{Est3} to get,
    \begin{align}
    \(\int_E|[b,I_\alpha]_1(f_j,g_j)(x)|^qdx\)^{\frac1q}&\lesssim |Q_j|^{\frac{1}{p_1'}+\frac{1}{p_2'}}\(\int_E|x-y_j|^{-q(2n-\alpha)}dx\)^\frac{1}{q}\notag\\
    &\ \ \ \ +|Q_j|^{\frac{1}{p_1'}+\frac1{p_2'}+\frac1n}\(\int_E\frac{|b(x)-b_{Q_j}|}{|x-y_j|^{q(2n-\alpha+1)}}dx\)^{\frac1q}\notag\\
    &\lesssim \(\frac{|E|^{1/q}}{|Q_j|^{1/q}}+\(\frac{1}{|Q_j|}\int_{E}|b(x)-b_{Q_j}|^qdx\)^{\frac1q}\)\label{star}
    \end{align}
From here the arguments in \cite{CDW} can be followed identically, and it shown there that there exists some positive constant $\tilde C$ depending on $\gamma_1,\ \gamma_2,$ and $b$ such that
$$\eqref{star}\lesssim \frac{|E|^{1/q}}{|Q_j|^{1/q}}\(1+\log\(\frac{\tilde C|Q_j|}{|E|}\)\)^{\frac{\lfloor q\rfloor +1}q}$$
(see \cite[p.309]{CDW}). Clearly we can now select $0< \beta < \min(\tilde C^{1/n},\gamma_2)$ and sufficiently small so that \eqref{C2} holds.

 We are left with constructing the sequences that will lead to a contradiction depending on which of the conditions \eqref{1cmo}-\eqref{3cmo} $b$ is supposed to fail to satisfy. The arguments are again borrowed from \cite{CDW} but adapted to our bilinear situation.

 If $b$ does not satisfy \eqref{1cmo}, then there exists some $\eps>0$ and a sequence $\{Q_j\}$ with $|Q_j|\to0$ as $j\to\infty$ such that for every $j$,
        \begin{align}
        \eps\leq\frac{1}{|Q_j|}\int_{Q_j}|b(y)-b_{Q_j}|dy\label{eps}
        \end{align}
        We then can pick a subsequence, which we will denote $\{Q_j^{(i)}\}$, so that
        \begin{align*}
        \frac{d_{j+1}^{(i)}}{d_{j}^{(i)}}&<\frac\beta{2\gamma_2}.
        \end{align*}
We also let $f_j^{(i)}$ and $g_j^{(i)}$ be the sequences associated to the selected cubes $Q_j^{(i)}$ as defined earlier on.

 For fixed $k$ and $m$, we define the following sets,
\begin{align*}
  G&=\{x:\gamma_1d^{(i)}_k<|x-y_k^{(i)}|<\gamma_2d_k^{(i)}\},\\
  G_1&=G\setminus\{x:|x-y_{k+m}^{(i)}|\leq\gamma_2d_{k+m}^{(i)}\},\\
  G_2&=\{x:|x-y_{k+m}^{(i)}|>\gamma_2d_{k+m}^{(i)}\}.
\end{align*}
Note that since $G_1=G\cap G_2$, we have,
\begin{align}
G_1&\subset G_2\label{G1}\\
G_1&=G\setminus\(G_2^c\cap G\)\label{G2}.
\end{align}
Also, by construction and our choice of $Q_j^{(i)}$'s, one can easily see that
\begin{align}
\frac{|G_2^c\cap G|}{|Q_k^{(i)}|}\leq\beta^n\label{G3},
\end{align}
see \cite[p.307]{CDW}.
It follows that
\begin{align*}
  \|[b,I_\alpha]_1&(f_k^{(i)},g_k^{(i)})-[b,I_\alpha]_1(f_{k+m}^{(i)},g_{k+m}^{(i)})\|_{L^q}\\
  &\geq\(\int_{G_1}|[b,I_\alpha]_1(f_k^{(i)},g_k^{(i)})-[b,I_\alpha]_1(f_{k+m}^{(i)},g_{k+m}^{(i)})|^q\)^{\frac1q}\\
  &\geq\(\int_{G_1}|[b,I_\alpha]_1(f_k^{(i)},g_k^{(i)})|^q\)^{\frac1q}-\(\int_{G_1}|[b,I_\alpha]_1(f_{k+m}^{(i)},g_{k+m}^{(i)})|^q\)^{\frac1q}\\
  &\geq\(\int_{G_1}|[b,I_\alpha]_1(f_k^{(i)},g_k^{(i)})|^q\)^{\frac1q}-\(\int_{G_2}|[b,I_\alpha]_1(f_{k+m}^{(i)},g_{k+m}^{(i)})|^q\)^{\frac1q}\\
  &=\(\int_{G}|[b,I_\alpha]_1(f_k^{(i)},g_k^{(i)})|^q - \int_{G_2^c\cap G}|[b,I_\alpha]_1(f_k^{(i)},g_k^{(i)})|^q\)^{\frac1q} \\
  &\,\,\,\,\,\,\,\,\,\,\,\,\,\,-\(\int_{G_2}|[b,I_\alpha]_1(f_{k+m}^{(i)},g_{k+m}^{(i)})|^q\)^{\frac1q}.
 \end{align*}
Using \eqref{C11}, \eqref{C2}, and \eqref{C12} in each of the three terms above we finally arrive at
\begin{align*}
  \|[b,I_\alpha]_1(f_k^{(i)},g_k^{(i)})-[b,I_\alpha]_1(f_{k+m}^{(i)},g_{k+m}^{(i)})\|_{L^q}
&\geq\(\gamma_3^q-\frac{\gamma_{3}^q}{4^q}\)^{\frac1q}-\frac{\gamma_3}{4}\\
  &\gtrsim\frac{\gamma_3}2.
   \end{align*}
Since every pair of terms in the sequence $\{[b,I_\alpha]_1(f_j^{(i)},g_j^{(i)})\}$ are at least $C\gamma_3$ apart from each other, there can be no convergent subsequence, and therefore $[b,I_\alpha]_1$ would not be compact.  So $b$ must satisfy \eqref{1cmo}.\\

If $b$ violates \eqref{2cmo}, we again have that there exists $\eps$ and sequence of cubes $\{Q_j\}$, this time with $|Q_j|\to\infty$ as $j\to\infty$, such that \eqref{eps} is satisfied. This time we take the subsequence $\{Q_j^{(ii)}\}$ so that $$\frac{d_{j}^{(ii)}}{d_{j+1}^{(ii)}}<\frac\beta{2\gamma_2}.$$
We can use a similar method as in the previous case, but since our diameters are increasing instead of decreasing, we simply define our sets in a `reversed' order, so for fixed $k$ and $m$, we have
\begin{align*}
  G&=\{x:\gamma_1d^{(ii)}_{k+m}<|x-y_{k+m}^{(ii)}|<\gamma_2d_{k+m}^{(ii)}\},\\
  G_1&=G\setminus\{x:|x-y_{k}^{(ii)}|\leq\gamma_2d_{k}^{(ii)}\},\\
  G_2&=\{x:|x-y_{k}^{(ii)}|>\gamma_2d_{k}^{(ii)}\}.
\end{align*}
As before we have that \eqref{G1}-\eqref{G3} hold, and so from here, the calculations are identical to those in the first case.\\

Finally, if \eqref{3cmo} is not satisfied, there exists some cube $Q$ with diameter $d$, and some sequence $\{y_j\}$, with $|y_j|\to\infty$, such that \eqref{eps} holds for $\{Q_j:=Q+y_j\}$. We then let $B_j=\{x\in\nR^n:|x-y_j|<\gamma_2d\}$, and choose $\{Q_j^{(iii)}\}$ so that $B_{j}\cap B_{k}=\emptyset$ for $j\neq k$.\\

Note that by the construction of the balls $B_j$, if we define $G,\ G_1,$ and $G_2$ as in $(i)$, we in fact have that $G=G_1=G\cap G_2$, and so $G_2^c\cap G=\emptyset$. This means that while the calculations for this case could certainly be simplified, it is sufficient to once again repeat the steps preformed in the first case to obtain the desired result.
\end{proof}

\end{document}